\newtheorem{theorem}{Theorem}[section]
\newtheorem{lemma}[theorem]{Lemma}
\theoremstyle{definition}
\newtheorem{theorem-definition}[theorem]{Theorem-Definition}
\theoremstyle{remark}
\newtheorem{remark}[theorem]{Remark}
\numberwithin{equation}{section}
\title{On the capacity dimensions of the Brjuno and Perez-Marco sets}\begin{author}[N.~Akramov]{Nurali Akramov}
    \address{National University of Uzbekistan,  Tashkent, Uzbekistan}
\email{nurali.akramov.1996$@$gmail.com}
\address{National University of Uzbekistan,  Tashkent, Uzbekistan}
\email{ashurovabduvahob007$@$gmail.com}
\date{}
\begin{document}

\begin{abstract}
    In this work, we prove that the complement of the Brjuno set $\mathcal{B}$ has a zero  capacity with respect to the kernel $k^1_\sigma(z,\xi)=\ln^2{|z-\xi|}\left|\ln{\ln{\left(e+\frac{1}{|z-\xi|}\right)}}\right|^\sigma$ for any
 $\sigma > 2$. Similarly, the complement of the Perez-Marco set $\mathcal{PM}$ has a zero capacity with respect to the kernel $
k^2_\sigma(z,\xi) = \ln^{2}{\ln\left(e+\frac{1} {\left| {z - \xi }\right|}\right)}
\cdot\ln^{\sigma}{\ln\ln\left(e^3+\frac{1} {\left| {z - \xi }\right|}\right)}$ for any $\sigma>2$.
\end{abstract}
\maketitle 
\section{Introduction}
The linearization of holomorphic maps near the fixed point is one of the main topics in local dynamical systems. Consider a holomorphic germ 
$$f(z)=\lambda z+c_2z^2+c_3z^3+...+c_dz^d+\dots.$$ 
We say that $f$ is \emph{linearizable} in a neighborhood of fixed point $z=0$ if there exists a germ of holomorphic function $\varphi$ (with $\varphi'(0)\not=0$) such that
$$\varphi(z)\circ f(z)\circ\varphi^{-1}(z)=\lambda{z}.$$

 When $|\lambda|\not=0,1$, such a linearization always exists (see \cite{GK}). For $\lambda=e^{2\pi{i}\alpha}$ with an irrational $\alpha\in\mathbb{R}$, A. Brjuno established the sharp linearizability condition.
 \begin{theorem}[{Brjuno \cite{ABR}, Yoccoz \cite{YOC}}]
If $\alpha$ is a Brjuno number,  $f$ is linearizable. If $\alpha$ is a Brjuno number, then $f(z)=e^{2\pi i\alpha}z+z^2$ is not linarizable an it has the property that every neighborhood of the origin contains infinitely many periodic orbits.    
 \end{theorem}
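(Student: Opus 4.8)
Since the displayed statement is the classical Brjuno--Yoccoz theorem (and the second sentence evidently should read ``if $\alpha$ is \emph{not} a Brjuno number''), my plan is to reconstruct its two essentially independent halves.

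\textbf{Sufficiency (Brjuno).} Write $\lambda=e^{2\pi i\alpha}$ and seek the conjugacy as a formal series $\varphi(z)=z+\sum_{k\ge 2}\varphi_k z^k$ with $\varphi(\lambda z)=f(\varphi(z))$. Matching coefficients gives $(\lambda^k-\lambda)\varphi_k=Q_k(c_2,\dots,c_k;\varphi_2,\dots,\varphi_{k-1})$, so $\varphi$ is uniquely determined and the sole obstruction to a positive radius of convergence is the small divisors $\lambda^k-\lambda$, whose size is controlled by $\|(k-1)\alpha\|$. I would then run the classical majorant argument: expand $\varphi_k$ as a sum over binary trees with $k$ leaves, bound the analytic part of each contribution geometrically, and bound the product of small divisors collected along the tree by $e^{Ck\,B(\alpha)}$, where $B(\alpha)=\sum_{n\ge0}\frac{\log q_{n+1}}{q_n}$ is the Brjuno sum. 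This last estimate rests on the arithmetic fact that a given continued-fraction denominator $q_n$ can be ``responsible'' for a small divisor only a controlled number of times along any branch (Brjuno's lemma, later sharpened by Yoccoz). Finiteness of $B(\alpha)$ then yields $\limsup_k|\varphi_k|^{1/k}<\infty$, hence linearizability on a neighbourhood of $0$.

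\textbf{Necessity and periodic orbits (Yoccoz).} Specialize to $P_\alpha(z)=e^{2\pi i\alpha}z+z^2$ and let $r(\alpha)$ denote the conformal radius at $0$ of the Siegel disk, set to $0$ when $P_\alpha$ is not linearizable. The core of the plan is Yoccoz's two-sided bound $|\log r(\alpha)+B(\alpha)|\le C$ for a universal constant $C$, with $B$ the Brjuno function (so $B(\alpha)=+\infty$ off the Brjuno set). The lower bound $\log r(\alpha)\ge -B(\alpha)-C$ is soft: it follows from the majorant construction above together with the Koebe $\tfrac14$-theorem. The upper bound $\log r(\alpha)\le -B(\alpha)+C$ --- the hard half --- I would obtain via Yoccoz's complex-renormalization scheme: using the Siegel disk and a conformal change of variable mirroring one step of the Gauss continued-fraction algorithm, one relates $r(\alpha)$ to $r(\alpha')$ for the renormalized rotation number $\alpha'$, losing only a bounded additive term, with the Koebe distortion theorem and a subharmonicity (harmonic-measure) estimate doing the geometric work; iterating regenerates the Brjuno sum. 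If $\alpha$ is not a Brjuno number, $B(\alpha)=\infty$ forces $r(\alpha)=0$, i.e.\ there is no Siegel disk and $P_\alpha$ is not linearizable. For the ``infinitely many periodic orbits'' conclusion I would invoke the small-cycles argument: on a disk where $0$ fails to be linearizable the iterates $P_\alpha^{q_n}$ cannot form a normal family, and counting solutions of $P_\alpha^{q_n}(z)=z$ via the argument principle produces periodic points other than $0$ accumulating at the origin for infinitely many closest-return times $q_n$.

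I expect the two genuine obstacles to be exactly (i) the combinatorial/arithmetic estimate that the product of small divisors along a resonance tree of size $k$ is at most $e^{Ck\,B(\alpha)}$ in the sufficiency, and (ii) Yoccoz's renormalization inequality $\log r(\alpha)\le -B(\alpha)+C$, which is the conceptual heart of the theorem and requires the full apparatus of conformal renormalization; everything else --- uniqueness of the formal conjugacy, the lower bound on $r(\alpha)$, and the normality argument for periodic orbits --- is comparatively routine.
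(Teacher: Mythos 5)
This theorem is stated in the paper purely as background and is not proved there at all; it is quoted with citations to Brjuno \cite{ABR} and Yoccoz \cite{YOC}, so there is no ``paper's proof'' to compare against. Your reading of the statement is right: the second sentence contains a misprint and should say ``if $\alpha$ is \emph{not} a Brjuno number,'' and your outline is the standard architecture of the classical proof --- formal linearization with small divisors $\lambda^k-\lambda$, the majorant/tree expansion controlled by Brjuno's arithmetic lemma for sufficiency, Yoccoz's two-sided estimate $|\log r(\alpha)+B(\alpha)|\le C$ via continued-fraction renormalization for the quadratic polynomial, and the small-cycles/non-normality argument for the periodic orbits. So as a roadmap it is faithful to how the result is actually proved in the literature.

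That said, what you have written is a plan, not a proof. The two steps you yourself flag as the ``genuine obstacles'' --- the bound $e^{CkB(\alpha)}$ on the product of small divisors along a resonance tree, and the renormalization inequality $\log r(\alpha)\le -B(\alpha)+C$ --- are not incidental technicalities to be filled in later; they \emph{are} the theorem. Brjuno's lemma requires a careful accounting of how many indices $k\le N$ can have their small divisor dominated by a given denominator $q_n$, and Yoccoz's upper bound occupies the bulk of \cite{YOC} (construction of the renormalized return map, control of the loss of conformal radius at each step, and the harmonic-measure estimate). Likewise the small-cycles argument needs the precise statement that non-linearizability forces fixed points of $P_\alpha^{q_n}$ other than $0$ in every neighborhood, which itself rests on Yoccoz's geometric control near the closest-return times. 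Since the paper only cites these results, the appropriate resolution here is a citation rather than a reconstruction; if a self-contained proof were demanded, your outline would need each of those three components carried out in full.
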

Perez-Marco completely characterized the multipliers for which such periodic orbits must appear.

\begin{theorem}[{Perez-Marco \cite{MJ}}] If $\alpha$ is a Perez-Marco number, $\alpha\in \mathcal{PM}$, then
any non linearizable germ with multiplier $e^{2\pi \alpha i}$
contains infinitely many periodic orbits in every neighborhood of the
origin. Moreover, whenever $\alpha$ is not a Perez-Marco number there exists a non-linearizable germ with multiplier $e^{2\pi \alpha i}$ which has no  periodic orbit other than the fixed point itself within some neighborhood of the fixed point.
\end{theorem}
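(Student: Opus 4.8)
I would reproduce Perez-Marco's strategy, which passes from the local dynamics of $f$ at $0$ to an analytic circle diffeomorphism via \emph{hedgehogs} and then extracts periodic orbits by continued-fraction renormalization; the arithmetic threshold for merely \emph{trapping} a periodic point sits exactly one logarithm below the Brjuno threshold for linearization, and that gap is the content of $\mathcal{PM}$.

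\textbf{Reduction to a circle map.} Let $f(z)=e^{2\pi i\alpha}z+O(z^2)$ be non-linearizable. Pick a Jordan domain $U\ni 0$ inside the univalence disk of $f$ and let $K=K_U$ be the component containing $0$ of $\{z:\ f^n(z)\in\overline U\ \text{for all }n\ge 0\}$. One first proves $K$ is compact, connected, full, $f$-invariant, meets $\partial U$, and — since $f$ is non-linearizable — has empty interior, so $0\in\partial K$. Uniformize: let $\Phi\colon\widehat{\mathbb C}\setminus K\to\widehat{\mathbb C}\setminus\overline{\mathbb D}$ be conformal, $\Phi(\infty)=\infty$, $\Phi'(\infty)>0$. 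The structural step is to show $g:=\Phi\circ f\circ\Phi^{-1}$ extends to an analytic diffeomorphism of $\partial\mathbb D$ with rotation number $\alpha$, holomorphic on a one-sided annulus $A_\rho=\{1<|w|<1+\rho\}$, and that $\Phi$ transports periodic orbits: a point $w_0\in A_\rho$ with $g^q(w_0)=w_0$ pulls back through $\Phi^{-1}$ to a periodic orbit of $f$ of period $q$ inside $U$ winding around $K$. Since $\operatorname{diam}K\le\operatorname{diam}U$ and $0\in K$, shrinking $U$ (equivalently, using equidistribution of high-period orbits around $\partial K\ni0$) reduces the first assertion to producing periodic orbits of $g$ arbitrarily close to $\partial\mathbb D$.

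\textbf{Renormalization and the $\mathcal{PM}$ threshold.} With $p_n/q_n$ the convergents of $\alpha$ and $\alpha_n$ the Gauss-map iterates, renormalizing the commuting pair $(g^{q_{n-1}},g^{q_n})$ produces analytic circle maps $\mathcal R^n g$ of rotation number $\alpha_n$ on annuli of width $\rho_n$, with a Koebe-type bookkeeping giving $\log(1/\rho_n)\asymp\log(1/\rho)+\sum_{k\le n}\frac{c}{q_k}\log q_{k+1}$. Thus the Brjuno sum $\sum_k\frac1{q_k}\log q_{k+1}<\infty$ keeps $\inf_n\rho_n>0$ and yields analytic linearizability of $g$, hence of $f$; and if $\alpha$ is \emph{not} Brjuno but $\alpha\in\mathcal{PM}$, i.e. $\sum_k\frac1{q_k}\log\log q_{k+1}<\infty$, the same bookkeeping forces $\rho_n$ to decay slowly enough relative to $q_n$ (roughly $\rho_n\ge q_{n+1}^{-o(1)}$ along a subsequence) that a Rouché/winding-number argument applied to $\mathcal R^ng-\mathrm{id}$ locates, for infinitely many $n$, a periodic point of $g$ of period $q_n$ and winding number $p_n$ inside $A_{\rho_n}$. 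Unwinding, each yields a periodic orbit of $f$ near $0$. The extra logarithm is structural: solving $g^{q_n}(w)=w$ only needs the holomorphy domain of $\mathcal R^n g$ to survive, not the Schwarz-lemma control needed to build a conjugacy.

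\textbf{Converse, and the main difficulty.} For $\alpha\notin\mathcal{PM}$ (hence $\alpha\notin\mathcal B$, since $\mathcal B\subset\mathcal{PM}$) I would, following Perez-Marco, construct a non-linearizable germ with multiplier $e^{2\pi i\alpha}$ whose only periodic orbit in a fixed neighborhood of $0$ is the origin, e.g. as $f=\lim_k f_k$ where $f_k$ agrees with $f_{k-1}$ off a shrinking disk $D_k\ni0$ and inside $D_k$ is altered by a quasiconformal surgery (or an explicit interpolation between affine models) killing every periodic orbit of period $\le q_k$ in $D_k$ while preserving the multiplier and univalence: the hypothesis $\sum_k\frac1{q_k}\log\log q_{k+1}=\infty$ is exactly what keeps the accumulated dilatations — the geometric room spent at stage $k$ — under control, so $f$ is holomorphic; non-Brjuno-ness makes $f$ non-linearizable; and any periodic orbit of $f$ near $0$ would have some finite period $q\le q_k$ for large $k$ and was already removed. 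The main obstacle is the quantitative renormalization behind both directions: one must pin down the holomorphy domain of $\mathcal R^ng$ in terms of $q_n,q_{n+1}$ sharply enough that crossing from the Brjuno to the Perez-Marco threshold is precisely the loss of ``one logarithm of room'' — still enough to trap a fixed point of $\mathcal R^ng$ though not a conjugacy — and, symmetrically, that outside $\mathcal{PM}$ the room is genuinely insufficient; the hedgehog dictionary then converts this estimate into the statement about periodic orbits of $f$.
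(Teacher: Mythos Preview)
The paper does not prove this theorem at all: it is stated in the introduction as a known background result, with a citation to \cite{MJ}, and no argument is given. So there is no ``paper's own proof'' to compare against; your proposal is not a reconstruction of anything in this paper but rather a sketch of Perez-Marco's original strategy.

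That said, as a sketch of the actual Perez-Marco argument your outline is broadly on target --- hedgehogs, the uniformization to an analytic circle diffeomorphism of rotation number $\alpha$, continued-fraction renormalization, and the ``one extra logarithm'' heuristic distinguishing $\mathcal{PM}$ from $\mathcal{B}$ --- but it remains a high-level road map rather than a proof. Several of the steps you pass over are where the real work lies: the existence and properties of the hedgehog $K$ (full, totally invariant, touching $\partial U$, empty interior in the non-linearizable case), the fact that $\Phi\circ f\circ\Phi^{-1}$ extends analytically across $\partial\mathbb D$, the precise renormalization estimates that make the Rouch\'e argument go through exactly under the $\mathcal{PM}$ condition, and the delicate inductive construction for the converse. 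None of these can be filled in from what you wrote. For the purposes of \emph{this} paper, however, no proof is expected: the theorem is quoted only to motivate the definition of $\mathcal{PM}$, and the paper's own results (Theorems~\ref{t:mainB} and~\ref{t:mainPM}) are about capacity dimensions, proved by entirely different, potential-theoretic methods.
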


To define the Brjuno set and the Perez-Marco set, we need the continued fraction expansion of $\alpha$ obtained via the Gauss map $A(\alpha):(0,1]\to[0,1]$ defined as 
$$A(\alpha)=\frac{1}{\alpha}-\left[\frac{1}{\alpha}\right]$$
where $[x]$ the whole part of the number $x$. For $\alpha\in\mathbb{R}\setminus\mathbb{Q}$, we define $\alpha_0=\alpha-[\alpha]$ and then we continue by induction: $\alpha_{n+1}=A(\alpha_n)$, $a_{n+1}=[\frac{1}{\alpha_n}]$. 

Since $\alpha$ is an irrational number, this process produces infinite continued fraction expansion and so, $\alpha$ can be written as
$$\alpha=a_0+\cfrac{1}{a_1+\cfrac{1}{a_2+\dots\cfrac{1}{a_n+\dots}}}=:[a_0,a_1,...,a_n,...]$$
where $a_j$ are positive integers. 

The following fraction is called $n$th convergent of $\alpha$ an this fraction is a rational 
\begin{equation}\label{eq:Qn}
 \frac{P_n}{Q_n}:=[a_0,a_1,...,a_n].   
\end{equation}

 An irrational number $\alpha\in\mathbb{R}$ is called \emph{the Brjuno number} if its sequence of continued fraction denominators $Q_n$ satisfies
\begin{equation}
\sum_{n=1}^\infty\frac{\ln{Q_{n+1}}}{Q_n}<+\infty,
\end{equation}
and we say that $\alpha$ is \emph{Perez-Marco number} if 
\begin{equation}
\sum_{n=1}^\infty\frac{\ln{\ln{Q_{n+1}}}}{Q_n}<+\infty.
\end{equation}
The set of all Brjuno numbers, denoted by $\mathcal{B}$, is called \emph{the Brjuno set} while the set of all Perez-Marco numbers, denoted by $\mathcal{PM}$, is called \emph{the Perez-Marco set}.

Our first main result is capacity dimension of the Brjuno set (see Section 2 for definitions of  $C_\sigma$-capacity and the Hausdorff $H^h$-measure).

\begin{theorem}\label{t:mainB}
The complement $ {\mathbb{R}}
\backslash \mathcal{B}$  of the Brjuno set has zero $
C_\sigma $ -capacity with respect to the kernel
$$k^1_\sigma(z,\xi)=\ln^2{|z-\xi|}\left|\ln{\ln{\left(e+\frac{1}{|z-\xi|}\right)}}\right|^\sigma,\,\,
 \sigma > 2.$$
In particular, it
has zero Hausdorff $ H^{h^1_\sigma}$-measure with respect to the
gauge function $$ h_\sigma^1\left( t \right) =  \ln^{-2} \frac{1}{t}\cdot\ln^{ -
\sigma }{ \ln \frac{1}{t}},\,\,\,\sigma
> 2,\,\,\,0\leq t\leq e^{-2}.$$
\end{theorem}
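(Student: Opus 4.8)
The plan is to reduce the assertion to a covering estimate for the set of irrational non‑Brjuno numbers in $(0,1)$, and to build that covering out of continued fraction cylinders by a stopping–time argument driven by the divergence of the Brjuno series.

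\emph{Step 1 (reductions).} Brjuno‑ness of $\alpha$ depends only on $\{\alpha\}$, so $\mathbb R\setminus\mathcal B$ is a countable union of integer translates of $E:=(0,1)\setminus\mathcal B$ together with $\mathbb Z$; since $\mathbb Q$ is countable it is both $C_\sigma$‑ and $H^{h^1_\sigma}$‑null, so it suffices to treat the irrational points of $E$. Because $Q_{n+1}=a_{n+1}Q_n+Q_{n-1}\ge Q_n+Q_{n-1}\ge 2Q_{n-1}$, one has $Q_n\ge\phi^{\,n-1}$ ($\phi$ the golden ratio) and, more importantly, $\sum_n\ln Q_n/Q_n<\infty$ and $\sum_n 1/Q_n<\infty$ for \emph{every} $\alpha$; writing $\ln Q_{n+1}=\ln a_{n+1}+\ln Q_n+O(1)$ then shows the Brjuno series $\sum_n\ln Q_{n+1}/Q_n$ converges if and only if $\sum_n\ln a_{n+1}/Q_n$ does. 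Hence an irrational $\alpha$ lies in $E$ precisely when $\sum_{n\ge 1}\ln a_{n+1}(\alpha)/Q_n(\alpha)=+\infty$.

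\emph{Step 2 (the covering).} Fix $N\in\mathbb N$. For irrational $\alpha\in E$ the tail $\sum_{n\ge N}\ln a_{n+1}/Q_n$ diverges, so there is a least $M=M_N(\alpha)\ge N$ with $\sum_{n=N}^{M}\ln a_{n+1}/Q_n\ge 1$; set $s=s_N(\alpha):=\sum_{n=N}^{M-1}\ln a_{n+1}/Q_n<1$, so that $a_{M+1}\ge e^{(1-s)Q_M}$. Since $\bigl|\alpha-P_M/Q_M\bigr|\asymp 1/(Q_MQ_{M+1})\le e^{-(1-s)Q_M}/Q_M^2$, the point $\alpha$ lies in
\[
J(a_1,\dots,a_M):=\bigcup_{b\ge\lceil e^{(1-s)Q_M}\rceil}I(a_1,\dots,a_M,b),
\]
an interval centred at $P_M/Q_M$ of length $\lesssim e^{-(1-s)Q_M}/Q_M^2$. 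Running over all ``stopping addresses'' $(a_1,\dots,a_M)$ (those whose running sum up to level $M-1$ is $<1$), the sets $J(a_1,\dots,a_M)$ form a \emph{pairwise disjoint} cover of $E\cap(0,1)$: disjointness follows because a later stopping address, having running sum $<1$ up to a level that already includes $M$, cannot meet the condition $a_{M+1}\ge\lceil e^{(1-s)Q_M}\rceil$ that defines $J(a_1,\dots,a_M)$.

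\emph{Step 3 (the estimate) and the main obstacle.} It remains to show $\Sigma_N:=\sum_J h^1_\sigma(|J|)\to 0$ as $N\to\infty$. From $\ln(1/|J|)\gtrsim (1-s)Q_M+2\ln Q_M$ and monotonicity of $h^1_\sigma$,
\[
h^1_\sigma(|J|)\lesssim\bigl((1-s)Q_M+2\ln Q_M\bigr)^{-2}\bigl(\ln((1-s)Q_M+2\ln Q_M)\bigr)^{-\sigma}.
\]
One organises $\Sigma_N$ as a sum over the tree of stopping addresses and estimates it from the root downward, using the standard continued fraction facts $\sum_{(a_1,\dots,a_n)}Q_n^{-2}\asymp 1$, $Q_n\ge\prod_i a_i\ge\phi^{\,n-1}$, and $\#\{(a_1,\dots,a_n):Q_n\le Q\}\ll Q(\log Q)^{n-1}$. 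The gain of the construction is that every stopping address has all partial quotients bounded, $a_m< e^{(1-s_{m-1})Q_{m-1}}$, which is exactly what keeps the inner sums over the last partial quotient convergent; carrying this out bounds $\Sigma_N$ by the tail of a convergent series, whence $\Sigma_N\to 0$, \emph{provided $\sigma>2$}. This already yields $H^{h^1_\sigma}(E\cap(0,1))=0$; since $k^1_\sigma(z,\xi)\,h^1_\sigma(|z-\xi|)\asymp 1$ and $C_\sigma(I)\asymp h^1_\sigma(|I|)$ for an interval $I$, the comparison of $C_\sigma$ with the $h^1_\sigma$‑Hausdorff content established in Section~2 gives $C_\sigma(E\cap(0,1))=0$, and translation invariance together with countable subadditivity of $H^{h^1_\sigma}$ completes the theorem. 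The whole difficulty lies in Step~3, and within it in the stopping addresses whose running sum $s$ is close to $1$: for these $J$ is only polynomially small, $h^1_\sigma(|J|)\asymp(\ln Q_M)^{-2}(\ln\ln Q_M)^{-\sigma}$, and one must prove such addresses are sufficiently rare. This is where the hypothesis $\sigma>2$ is consumed — the $\ln^{-2}$ of the gauge is matched against the ``$(\log Q)^{-2}$ worth of cylinders at scale $Q$'', and the residual $\ln^{-\sigma}\ln$ with $\sigma>2$ is what makes the double sum over levels and scales converge after the tree bookkeeping; one expects $\sigma>1$ to be the true threshold, the loss being an artefact of this argument.
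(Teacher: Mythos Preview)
Your approach is entirely different from the paper's, and its central step is not actually carried out. The paper does not build a covering of $E$ at all; it exhibits a single finite Borel measure
\[
\mu=\sum_{q\ge 10}\sum_{p=1}^{q-1}\frac{\delta_{p/q}}{q^2\,\ln^{1+\varepsilon}q}
\]
supported on the rationals in $[0,1]$, and shows directly that the $k^1_{2+4\varepsilon}$-potential $U^\mu(\alpha)=+\infty$ at every non-Brjuno $\alpha\in[0,1]$; property~2 of $C_\sigma$-capacity then gives $C^1_\sigma(E)=0$, and the Hausdorff statement follows from property~4. The only analytic input is a short lemma: if $\sum_n\ln Q_{n+1}/Q_n=+\infty$ then $\sum_n\frac{\ln^2 Q_{n+1}\,(\ln\ln Q_{n+1})^{2+4\varepsilon}}{Q_n^2\,\ln^{1+\varepsilon}Q_n}=+\infty$, proved by splitting the indices according to whether $(\ln\ln Q_{n+1})^{2+4\varepsilon}<(\ln Q_n)^{2+3\varepsilon}$ and applying Cauchy--Schwarz on the complementary index set. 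No covers, no stopping times, no tree bookkeeping.

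Your Step~3 is a programme, not a proof. You write that ``carrying this out bounds $\Sigma_N$ by the tail of a convergent series, whence $\Sigma_N\to 0$,'' but nothing is carried out, and you yourself identify the obstruction: stopping addresses with running sum $s$ close to $1$ yield intervals with $h^1_\sigma(|J|)\asymp(\ln Q_M)^{-2}(\ln\ln Q_M)^{-\sigma}$, and the claim that such addresses are sufficiently rare is exactly what has to be proved. None of the facts you invoke controls this: the count $\#\{(a_1,\dots,a_n):Q_n\le Q\}\ll Q(\log Q)^{n-1}$ ignores the constraint $s<1$ that is supposed to make the family sparse, and $\sum Q_n^{-2}\asymp 1$ bounds cylinder lengths, not $h^1_\sigma$-sums. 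Without an actual estimate here, the covering argument is incomplete. There is also a smaller gap of direction: Section~2 only records $C_\sigma=0\Rightarrow H^{h^1_\sigma}=0$, so to go from Hausdorff content zero to capacity zero you need the separate inequality $C_\sigma(I)\lesssim h^1_\sigma(|I|)$ for intervals together with countable subadditivity of $C_\sigma$; you assert the former but do not justify it.
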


Similarly, we have the following result for the Perez-Marco set.
\begin{theorem}\label{t:mainPM}

The complement $ {\mathbb{R}}
\backslash \mathcal{PM}$  of the Perez-Marco set has zero $
C_\sigma $-capacity with respect to the kernel
$$
k^2_\sigma(z,\xi) = \ln^{2}{\ln\left(e+\frac{1} {\left| {z - \xi }\right|}\right)}
\cdot\ln^{\sigma}{\ln\ln\left(e^3+\frac{1} {\left| {z - \xi }\right|}\right)},\,\,
 \sigma > 2.$$
In particular, it
has zero Hausdorff $ H^{h^2_\sigma}$-measure with respect to the
gauge function   $$h_\sigma^2\left( t \right) = \ln^{ -
2}{\ln \frac{1}{t}}\cdot\ln^{ -
\sigma}\ln{\ln \frac{1}{t}},\,\,\,\sigma
> 2,\,\,\,0\leq t\leq e^{-10}.$$
\end{theorem}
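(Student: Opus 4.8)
The plan is to prove the capacity statement; the Hausdorff assertion is then immediate, because comparing the two displayed formulas gives $h^2_\sigma(t)\asymp 1/k^2_\sigma(t)$ as $t\to 0^+$ (the constants $e,e^3$ being asymptotically negligible), and the comparison between $C_\sigma$‑capacity and Hausdorff measure recalled in Section~2 turns $C_{k^2_\sigma}(\cdot)=0$ into $H^{h^2_\sigma}(\cdot)=0$.

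\emph{Reductions.} The set $\mathcal{PM}$, and the series defining it, depend only on the partial quotients, so $\mathbb R\setminus\mathcal{PM}=\mathbb Q\cup\bigcup_{n\in\mathbb Z}(n+E)$ where $E:=\{\alpha\in(0,1)\setminus\mathbb Q:B(\alpha)=\infty\}$ and $B(\alpha):=\sum_{n\ge n_0}\frac{\ln\ln Q_{n+1}(\alpha)}{Q_n(\alpha)}$, $n_0$ being a fixed index chosen so that every summand is $\ge 0$. Since $k^2_\sigma$ is translation invariant, a point (hence any countable set) has zero $C_\sigma$‑capacity; using that $C_\sigma$‑capacity is finitely subadditive (a Cauchy--Schwarz argument on energies) and inner regular, a countable union of zero‑capacity sets has zero capacity; so it suffices to prove $C_{k^2_\sigma}(E)=0$. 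Finally $E=\bigcap_{M\ge 1}\{B>M\}$, so by monotonicity of capacity it suffices to show $C_{k^2_\sigma}(\{B>M\})\to 0$ as $M\to\infty$.

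\emph{The cover.} Fix $M$. Call a finite word $w=(a_1,\dots,a_m)$ \emph{live} if its partial sum $R(w):=\sum_{j}\frac{\ln\ln Q_j(w)}{Q_{j-1}(w)}$ is $\le M$; for a live $w$ let $a^{\ast}(w)$ be the least $a\ge 1$ with $R(w\cdot a)>M$ (it exists, since $R(w\cdot a)\to\infty$), and set $J(w):=\{\alpha\in I(w): a_{|w|+1}(\alpha)\ge a^{\ast}(w)\}$, an interval abutting an endpoint of the cylinder $I(w)$ with $|J(w)|\asymp |I(w)|/a^{\ast}(w)\asymp (a^{\ast}(w)\,Q(w)^2)^{-1}$. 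The intervals $J(w)$ over live $w$ are pairwise disjoint and cover $\{B>M\}$: given $\alpha$ with $B(\alpha)>M$, take $w$ to consist of the partial quotients of $\alpha$ up to the last one before the partial sums of $B$ first exceed $M$. Since every probability measure on an interval $I$ has $k^2_\sigma$‑energy at least $k^2_\sigma$ evaluated at the diameter of $I$, one has $C_{k^2_\sigma}(I)\lesssim 1/k^2_\sigma(|I|)\asymp h^2_\sigma(|I|)$, and subadditivity gives $C_{k^2_\sigma}(\{B>M\})\lesssim \Sigma(M):=\sum_{w\ \text{live}} h^2_\sigma(|J(w)|)$.

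\emph{The crux.} From $a^{\ast}(w)\asymp \max\bigl(1,\ Q(w)^{-1}\exp\exp\bigl((M-R(w))Q(w)\bigr)\bigr)$ one computes $h^2_\sigma(|J(w)|)\asymp \Delta(w)^{-2}(\ln\Delta(w))^{-\sigma}$ with $\Delta(w):=\max\bigl(\ln\ln Q(w),\ (M-R(w))Q(w)\bigr)$; it is precisely the doubly exponential size of a "bad" partial quotient that produces the doubly iterated logarithm in the kernel. Thus $\Sigma(M)$ is, up to constants, a sum of $\Delta(w)^{-2}(\ln\Delta(w))^{-\sigma}$ over the infinite tree of live words, which I would estimate recursively. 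Passing from a live $w$ to its live children $w\cdot a$ ($1\le a<a^{\ast}(w)$), one splits into: small $a$, where $\Delta(w\cdot a)\asymp\Delta(w)$ but along any all‑small branch the denominators grow geometrically, forcing $(M-R)Q$ eventually to dominate and $\Delta$ to grow geometrically, which makes that branch summable; and large $a$, where $\Delta(w\cdot a)\gtrsim a\,(\Delta(w)-\ln\ln(aQ(w)))$, so that the sum over such $a$ is dominated by $\sum_a a^{-2}(\ln a)^{-\sigma}$. One also observes that a live word with $\Delta(w)=\ln\ln Q(w)$ --- equivalently with $R(w)$ within $\frac{\ln\ln Q(w)}{Q(w)}$ of $M$ --- has no live children. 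Running this bookkeeping should give $\Sigma(M)\to 0$. This recursive estimate is the heart of the matter and the main obstacle; it is where the hypothesis $\sigma>2$ enters, to guarantee convergence of the controlling series, and it is the exact analogue of the key estimate behind Theorem~\ref{t:mainB} with $\ln Q_{n+1}$ replaced by $\ln\ln Q_{n+1}$ throughout --- the "bad" partial quotients, of size $\exp(cQ_n)$ in the Brjuno case, here having size $\exp\exp(cQ_n)$, which is the sole source of the extra logarithm in $k^2_\sigma$ and $h^2_\sigma$ relative to $k^1_\sigma$ and $h^1_\sigma$.
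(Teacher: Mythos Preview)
Your approach is genuinely different from the paper's and, as you yourself acknowledge, incomplete: the recursive tree estimate in ``The crux'' is never actually carried out. Phrases like ``Running this bookkeeping should give $\Sigma(M)\to 0$'' and ``This recursive estimate is the heart of the matter and the main obstacle'' are candid, but they mean the proposal is a plan rather than a proof. Whether your specific cover $\{J(w)\}$ satisfies $\Sigma(M)\to 0$ is not obvious, and the sketch you give of the recursion --- splitting children into small and large partial quotients, tracking when $(M-R)Q$ dominates $\ln\ln Q$, arguing that certain branches terminate --- would require substantial work to make rigorous.

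The paper bypasses all of this with a short potential-theoretic argument. Instead of covering $E$ and invoking subadditivity, it uses property~2 of the $C_\sigma$-capacity: a Borel set has zero capacity if and only if some \emph{finite} Borel measure has potential identically $+\infty$ on it. The paper writes down such a measure explicitly,
\[
\mu=\sum_{q\ge 10}\sum_{p=1}^{q-1}\frac{\delta_{p/q}}{q^{2}\ln^{1+\varepsilon}q},
\]
checks $|\mu|<\infty$, and observes that for $\alpha\in E$ the potential $U^\mu_\sigma(\alpha)$ dominates
\[
\sum_{n\ge 2}\frac{\ln^{2}\ln Q_{n+1}\,(\ln\ln\ln Q_{n+1})^{2+4\varepsilon}}{Q_n^{2}\,\ln^{1+\varepsilon}Q_n},
\]
simply by retaining only the terms $p/q=P_n/Q_n$ and using $|\alpha-P_n/Q_n|<1/(Q_nQ_{n+1})$. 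The divergence of this last series whenever $\sum\frac{\ln\ln Q_{n+1}}{Q_n}=+\infty$ is isolated as Lemma~\ref{lemma:pm}, whose proof is a short Cauchy--Schwarz argument: split indices according to whether $(\ln\ln\ln Q_{n+1})^{2+4\varepsilon}<(\ln Q_n)^{2+3\varepsilon}$; on the ``small'' side the Perez--Marco sum already converges, and on the ``large'' side one applies Cauchy--Schwarz against the convergent series $\sum 1/\ln^{1+2\varepsilon}Q_n$. This is precisely where $\sigma>2$ enters, through the auxiliary exponent $1+2\varepsilon>1$.

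So the paper's route is: one explicit measure supported on the rationals, one elementary number-theoretic lemma proved by Cauchy--Schwarz, and the standard equivalence between zero capacity and existence of a finite measure with everywhere-infinite potential. No covers, no trees, no recursion. Your covering strategy may well be completable, but even if so it is considerably more laborious than what is actually required.
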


 Theorem \ref{t:mainB} improves the result obtained by A. Sadullaev and K. Rakhimov about the capacity dimension of the Brjuno set (see \cite{AS1}). While,  Theorem \ref{t:mainPM} enhances K. Rakhimov's result on the capacity of  the Perez-Marco set (see \cite{KR}).

\section{Preliminary}

\subsection{Haussdorf $H^h$-measure} 
Consider a bound set $E\subset\mathbb{R}^n$ and strictly increasing continues function $h:[0,r_0]\to[0,+\infty)$, where $r_0>0$ and $h(0)=0$. For a fixed $\varepsilon>0$, take any covering of $E$ by finite family of open balls $B_j(x_j,r_j)$ where $1\le{j}\le{m}$ and $r_j<\varepsilon$. Define

$$H^{h}(E,\varepsilon)=\inf\left\{\sum_{j=1}^m h(r_j):\text{ } \bigcup_{j=1}^m{B_j}\supset E \right\}.$$
Since,  $H^{h}(E,\varepsilon)$ non-decreasing as $\varepsilon$ decreases, the limit
$$H^{h}(E)=\lim_{\varepsilon\to0+}H^{h}(E,\varepsilon)$$
exists. This limit is called the \textit{Hausdorff $H^h$-measure} of $E$.  If $h_{\alpha}(t)=t^\alpha$ for some  $\alpha>0$, then the measure $H^{h_\alpha}(E)$ becomes the standard $\alpha$-dimensional Hausdorff measure $H^\alpha(E)$.

\subsection{$C_\sigma$-capacity}
Capacities are one of the important tools in potential and pluripotential theory.  Many researchers, including A. Sadullaev, S. Imomkulov, E. Bedford, and B.A. Taylor, K. Rakhimov have made significant contributions in this domain and applied to various areas (see \cite{KR2},\cite{ERB},\cite{AS1},\cite{AS2}). In this section, we define a $\mathbb C^n$-capacity as in \cite{NSL2}, such that its null set is slightly larger than polar sets.

Let $K\subset\mathbb{C}$ be a compact set. We define the kernel $$k^1_\sigma(z,\xi)=\ln^2{|z-\xi|}\left|\ln{\ln{\left(e+\frac{1}{|z-\xi|}\right)}}\right|^\sigma$$
for any $\sigma>0$. 

For a positive Borel measure $\mu$ supported on $K$, define its potential in a point $z$ 
$$U_\sigma^1(z)=\int_K k_\sigma^1(z,\xi)d\mu(\xi).$$
We also define 
$$I_\sigma^1(\mu)=\int_KU_\sigma^1(z)d\mu(z)=\iint_{K\times K}k_\sigma^1(z,\xi)d\mu(z)d\mu(\xi) $$
and let $W^1_\sigma(K)=\inf\{I^1_\sigma(\mu)\}$ where $\mu$ is a probability measure supported on $K$.
The $C^1_\sigma$-capacity of $K$ is then defined by $$C^1_\sigma(K)=\left(W^1_\sigma(K)\right)^{-1}.$$
We also define the inner and outer $C^1_\sigma$-capacities with compact subsets and open supersets of any Borel set $E$
$$\underline{C}_\sigma(E)=\sup\{C_\sigma(K): K\subset{E},\text{ }K-compact\},$$
$$\overline{C}_\sigma(E)=\inf\{C_\sigma(U): U\supset{E},\text{ }U-open\}.$$

Analogously, using the kernel

$$k^2_\sigma(z,\xi) = \ln^{2}{\ln\left(e+\frac{1} {\left| {z - \xi }\right|}\right)}
\cdot\ln^{\sigma}{\ln\ln\left(e^3+\frac{1} {\left| {z - \xi }\right|}\right)}$$
for $\sigma>2$, we define $U_\sigma^2$, $I_\sigma^2$, $W_\sigma^2(K)$, and hence $C_\sigma^2$-capacity as
$$C^2_\sigma(K)=(W(K))^{-1},$$
with the same conventions for inner and outer capacities.

We note common properties for either capacities (see \cite{NSL1}, \cite{NSL2}, \cite{LCA}): 
\begin{enumerate}
    \item  For every Borel set $E\subset\mathbb{C}$ inner and outer $C_\sigma$-capacities coincide $$ \overline{C}_\sigma(E)=\underline{C}_\sigma(E)=C_\sigma(E).$$
    \item $C_\sigma$-capacity of a Borel set is equal to zero if and only if there exist a finite Borel measure $\mu$ such that
$$U_\sigma^\mu(z)\equiv+\infty,\text{ }\forall{z}\in{E}.$$    
    \item $C_\sigma$-capacity is subadditive, that is, if $E=\cup_{j=1}^\infty E_j$, then
    $$C_\sigma(E)\le\sum_{j=1}^\infty C_\sigma(E_j).$$
    \item Connection to the Hausdorff $H^h$-measure (see \cite{NSL2}):
  
If $C_\sigma^1(E)=0$ then the Hausdorff $H^h$-measure is equal to zero with respect to the gauge function 
$$ h_\sigma^1\left( t \right) =  \ln^{-2} \frac{1}{t}\cdot\ln^{ -
\sigma }{ \ln \frac{1}{t}},\,\,\,\sigma
> 2,\,\,\,0\leq t\leq e^{-2}.$$

Similarly, $C_\sigma^2(E)=0$ implies Hausdorff $H^h$-measure is zero with respect to the gauge function 
 $$h_\sigma^2\left( t \right) = \ln^{ -
2}{\ln \frac{1}{t}}\cdot\ln^{ -
\sigma}\ln{\ln \frac{1}{t}},\,\,\,\sigma
> 2,\,\,\,0\leq t\leq e^{-10}.$$ 

\end{enumerate}

\subsection{Some properties of continued fractions}  Let $\alpha$ be an irrational number, and $\frac{P_n}{Q_n}$ be as in \eqref{eq:Qn}. Let us recall some  properties of  $Q_n$ which we use in the sequel (see \cite{SMA}):
\begin{enumerate}
    \item\label{z.kasr1} For any $n\geq 1$, we have  
    $$Q_{n}\geq \frac{1}{2}\left(\frac{\sqrt{5}+1}{2}\right)^{n-1}.$$
    \item\label{z.kasr2} For any $n\geq 1$, we have 
    $$\frac{1}{2Q_{n}\cdot Q_{n+1}}<\left|\alpha-\frac{P_n}{Q_{n}}\right|<\frac{1}{Q_{n}\cdot Q_{n+1}}.$$
\end{enumerate}

\section{Proof of Theorem \ref{t:mainB} and Theorem \ref{t:mainPM}}

Let us first prove the following lemma.

\begin{lemma}\label{claim:brjuno} Let $\alpha$ be an irrational number and $Q_n$ be as in \eqref{eq:Qn}. If $$\sum_{n=1}^\infty\frac{\ln{Q_{n+1}}}{Q_n}=+\infty$$
  then, for any $\varepsilon>0$, we have
       $$ \ \sum_{n=2}^\infty\frac{\ln^2 Q_{n+1}(\ln{\ln{Q_{n+1}}})^{2+4\varepsilon}}{Q^2_n \ln^{1+\varepsilon}{Q_{n}}}=+\infty.$$ 
\end{lemma}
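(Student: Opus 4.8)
The plan is to write the general term of the target series as $b_n^2 w_n$, where
\[
b_n:=\frac{\ln Q_{n+1}}{Q_n},\qquad w_n:=\frac{(\ln\ln Q_{n+1})^{2+4\varepsilon}}{(\ln Q_n)^{1+\varepsilon}},
\]
so that $\sum_n b_n=+\infty$ by hypothesis, and then to prove $\sum_n b_n^2 w_n=+\infty$. One cannot simply apply Cauchy--Schwarz to $\sum_n b_n=\sum_n (b_n^2 w_n)^{1/2}\,w_n^{-1/2}$, because $\ln Q_n$, and hence $w_n^{-1}$, may grow arbitrarily fast along $(Q_n)$; the idea is to first discard the indices on which $b_n$ is negligible and, on the rest, to manufacture a lower bound for $w_n$ out of the size of $b_n$ itself.

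So first I would split $\{n\ge 2\}$ into $A:=\{n:b_n\ge Q_n^{-1/2}\}$ and its complement $B$. On $B$ we have $\sum_{n\in B}b_n\le\sum_n Q_n^{-1/2}<+\infty$, since the lower bound $Q_n\ge\frac12\big(\frac{\sqrt5+1}{2}\big)^{n-1}$ recalled in Section~2 makes $\sum_n Q_n^{-1/2}$ dominated by a convergent geometric series. As $\sum_n b_n=+\infty$, this forces $\sum_{n\in A}b_n=+\infty$, and it suffices to show $\sum_{n\in A}b_n^2 w_n=+\infty$.

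For $n\in A$ we have $\ln Q_{n+1}=b_nQ_n\ge Q_n^{1/2}$, hence $\ln\ln Q_{n+1}\ge\frac12\ln Q_n$, and therefore
\[
w_n\ \ge\ \frac{\big(\tfrac12\ln Q_n\big)^{2+4\varepsilon}}{(\ln Q_n)^{1+\varepsilon}}\ =\ 2^{-(2+4\varepsilon)}\,(\ln Q_n)^{1+3\varepsilon}
\]
— the exponents of the lemma are calibrated exactly so that $(2+4\varepsilon)-(1+\varepsilon)=1+3\varepsilon>1$. Since $\ln Q_n\ge(n-1)\ln\frac{\sqrt5+1}{2}-\ln2\ge c\,n$ for all large $n$ (with $c:=\frac12\ln\frac{\sqrt5+1}{2}>0$), there is a constant $C=C(\varepsilon)>0$ with $b_n^2 w_n\ge C\,b_n^2 n^{1+3\varepsilon}$ for every sufficiently large $n\in A$, and (finitely many small indices being irrelevant to divergence) it remains to prove $\sum_{n\in A}b_n^2 n^{1+3\varepsilon}=+\infty$.

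Now Cauchy--Schwarz does its job: for every $N$,
\[
\sum_{\substack{n\in A\\ n\le N}}b_n\ =\ \sum_{\substack{n\in A\\ n\le N}}\big(b_n n^{(1+3\varepsilon)/2}\big)\,n^{-(1+3\varepsilon)/2}\ \le\ \Big(\sum_{\substack{n\in A\\ n\le N}}b_n^2 n^{1+3\varepsilon}\Big)^{1/2}\Big(\sum_{n\ge1}n^{-(1+3\varepsilon)}\Big)^{1/2},
\]
and the last factor is finite precisely because $1+3\varepsilon>1$, whereas the left-hand side $\to+\infty$ as $N\to\infty$; hence $\sum_{n\in A}b_n^2 n^{1+3\varepsilon}=+\infty$, and the lemma follows. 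The only genuine obstacle is the first one: noticing that a blind Cauchy--Schwarz fails and that one must instead excise the harmless indices $B$ and then recycle $b_n\ge Q_n^{-1/2}$ into the estimate $w_n\gtrsim(\ln Q_n)^{1+3\varepsilon}$; everything afterwards is routine.
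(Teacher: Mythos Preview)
Your proof is correct and follows essentially the same architecture as the paper's: split the indices into a ``small'' set on which the Brjuno sum $\sum b_n$ converges and a ``large'' set on which the weight $w_n$ can be bounded below by a power of $\ln Q_n$, then apply Cauchy--Schwarz on the large set using that $\sum(\ln Q_n)^{-(1+\delta)}<\infty$. The only difference is the choice of threshold: you cut at $\ln Q_{n+1}\ge Q_n^{1/2}$, whereas the paper cuts at $(\ln\ln Q_{n+1})^{2+4\varepsilon}\ge(\ln Q_n)^{2+3\varepsilon}$; your cruder threshold is simpler to handle and gives a slightly cleaner write-up.
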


   \begin{proof} Consider the set $$\mathcal{N}:=\left\{n\in\mathbb{N} :\ (\ln\ln Q_{n+1})^{2+4\varepsilon}<(\ln Q_{n})^{2+3\varepsilon} \right\}.$$  
  Then we have 
    $$\sum_{n=2}^\infty\frac{\ln^2 Q_{n+1}(\ln{\ln{Q_{n+1}}})^{2+4\varepsilon}}{Q^2_n \ln^{1+\varepsilon}{Q_{n}}}=\sum_{n\in\mathcal{N}}\frac{\ln^2 Q_{n+1}(\ln{\ln{Q_{n+1}}})^{2+4\varepsilon}}{Q^2_n \ln^{1+\varepsilon}{Q_{n}}}+\sum_{n\notin\mathcal{N}}\frac{\ln^2 Q_{n+1}(\ln{\ln{Q_{n+1}}})^{2+4\varepsilon}}{Q^2_n \ln^{1+\varepsilon}{Q_{n}}}$$
    Let $\beta=\frac{2+3\varepsilon}{2+4\varepsilon}<1$. Then by the definition of the set $\mathcal{N}$ we have 
    $$\ln Q_{n+1}<e^{(\ln Q_{n})^\beta}$$ for any $n\in\mathcal{N}$. 
    Also, from the property \ref{z.kasr2} of continued fraction follows that, there exists $n_0'\in \mathbb{N}$ such that  $$(\ln{Q_{n}})^\beta<\frac{1}{2}\ln Q_{n},\quad \forall n\geq n'_0.$$  
    Using these inequalities, we come to the following conclusion: 
    \begin{align*} \sum_{n\in\mathcal{N},n\geq n'_0}\frac{\ln{Q_{n+1}}}{Q_{n}}&<\sum_{n\in\mathcal{N},n\geq n'_0}\frac{e^{(\ln{Q_{n}})^\beta}}{Q_{n}}=\sum_{n\in\mathcal{N},n\geq n'_0}\frac{e^{(\ln{Q_{n}})^\beta-\frac{1}{2}\ln Q_{n}}}{Q^{\frac{1}{2}}_n}\\&<\sum_{n\in\mathcal{N},n\geq n'_0}\frac{1}{Q^{\frac{1}{2}}_n}<+\infty.   
    \end{align*}
    
Consequently,  it follows that $$\sum_{n\notin\mathcal{N}}\frac{\ln{Q_{n+1}}}{Q_{n}}=+\infty.$$ 
    Then, by the Cauchy-Bunyakovsky inequality, we have  
\begin{align*}
+\infty&=\sum_{n\notin\mathcal{N}}\frac{\ln Q_{n+1}}{Q_{n} }\\
&=\sum_{n\notin\mathcal{N}}\frac{\ln Q_{n+1}\ln^{\frac{1}{2}+\varepsilon}{Q_{n}}}{Q_{n}\ln^{\frac{1}{2}+\varepsilon}{Q_{n}} }\\ 
&\leq \left(\sum_{n\notin\mathcal{N}}\frac{\ln^2 Q_{n+1}\ln^{1+2\varepsilon}{Q_{n}}}{Q^2_n }\right)^{\frac{1}{2}}\left(\sum_{n\notin\mathcal{N}}\frac{1}{\ln^{1+2\varepsilon}{Q_{n}} }\right)^{\frac{1}{2}}.
\end{align*}

 Again, from property \ref{z.kasr1} of continued fractions there exists a constant $C>0$ such that for all $ n\geq 2$ we have $\ln{Q_{n}}>Cn$. So, we have 
\begin{align*}
        \sum_{n=2}^\infty\frac{1}{\ln^{1+2\varepsilon}{Q_{n}} }<\frac{1}{C^{1+2\varepsilon}}\sum_{n=2}^\infty\frac{1}{n^{1+2\varepsilon} }<+\infty.
\end{align*} 
This implies that  \begin{align*}\sum_{n\notin\mathcal{N},n\geq 2}\frac{\ln^2 Q_{n+1}\ln^{1+2\varepsilon}{Q_{n}}}{Q^2_n }=+\infty.
\end{align*} 
Finally, we have
\begin{align*}
+\infty&=\sum_{n\notin\mathcal{N},n\geq 2}\frac{\ln^2 Q_{n+1}\ln^{1+2\varepsilon}{Q_{n}}}{Q^2_n }\\ &=\sum_{n\notin\mathcal{N},n\geq 2}\frac{\ln^2 Q_{n+1}(\ln{Q_{n}})^{2+3\varepsilon}}{Q^2_n \ln^{1+\varepsilon}{Q_{n}}} \\
&\leq \sum_{n\notin\mathcal{N},n\geq 2}\frac{\ln^2 Q_{n+1}(\ln{\ln Q_{n+1}})^{2+4\varepsilon}}{Q^2_n \ln^{1+\varepsilon}{Q_{n}}}\end{align*} which consequently gives us the desired result.
\end{proof} 
We are now ready to prove our first main result.
\begin{proof}[{Proof of Theorem \ref{t:mainB}}]  Consider the set
 $$E:=\left\{ \  \alpha\in [0,1]: \  \sum_{n=1}^\infty\frac{\ln Q_{n+1}}{Q_{n} }=+\infty  \right\} .$$ 
Since $C_\sigma$-capacity is countably sub-addittive it is enough to prove that $C_\sigma(E)=0$ for any $\sigma>2$.  Let $K\subset E$ be any compact set. Since, the Brjuno set is a Borel set, by property 1 of the $C_\sigma$-capacity, it is sufficient to show that $C_\sigma(K)=0$ for any $\sigma>2$.

For any $\alpha\in K$ and for any $\varepsilon>0$, we will prove that $$\sum_{q=10}^\infty\sum_{p=1}^{q-1}\frac{\ln^2{\left|\alpha-\frac{p}{q}\right|}(\ln{\ln{(e+\frac{1}{|\alpha-\frac{p}{q}|})}})^{2+4\varepsilon}}{q^2(\ln{q})^{1+\varepsilon}}=+\infty.$$

Since all terms are nonnegative, by using properties of continued fractions,  we bound  the sum below by focusing on $\frac{P_n}{Q_n}$:
\begin{align*}
\sum_{q=10}^\infty\sum_{p=1}^{q-1}\frac{\ln^2{\left|\alpha-\frac{p}{q}\right|}(\ln{\ln{(e+\frac{1}{|\alpha-\frac{p}{q}|})}})^{2+4\varepsilon}}{q^2({\ln{q}})^{1+\varepsilon}}&\ge\sum_{n=2}^\infty\frac{\ln^2 \left|\alpha-\frac{P_n}{Q_{n}}\right|(\ln{\ln{(e+\frac{1}{|\alpha-\frac{P_n}{Q_{n}}|})}})^{2+4\varepsilon}}{Q^2_n(\ln Q_{n})^{1+\varepsilon}}\\
&+\sum_{n=1}^\infty\sum_{p\neq P_n,p=1}^{Q_n-1}\frac{\ln^2{\left|\alpha-\frac{p}{Q_{n}}\right|}(\ln{\ln{(e+\frac{1}{|\alpha-\frac{p}{Q_{n}}|})}})^{2+4\varepsilon}}{Q_{n}^2(\ln{Q_{n}})^{1+\varepsilon}}\\ &+\sum_{q\neq Q_{n+1},q=10}^\infty\sum_{p=1}^{q-1}\frac{\ln^2{\left|\alpha-\frac{p}{q}\right|}(\ln{\ln{(e+\frac{1}{|\alpha-\frac{p}{q}|})}})^\sigma}{q^2(\ln{q})^{1+\varepsilon}}\\
&\ge\sum_{n=2}^\infty\frac{\ln^2 (Q_{n}Q_{n+1})(\ln{\ln{(Q_{n}Q_{n+1}))}})^{2+4\varepsilon}}{Q^2_n(\ln Q_{n})^{1+\varepsilon}}\\ &\geq\sum_{n=2}^\infty\frac{\ln^2 (Q_{n+1})(\ln{\ln{Q_{n+1}}})^{2+4\varepsilon}}{Q^2_n (\ln Q_{n})^{1+\varepsilon}}=+\infty.
\end{align*}
The last equality is thanks to Lemma \ref{claim:brjuno}.

Now, we define the following Borel measure
\begin{align}\label{Bme}
\mu=\sum_{q=10}^\infty\sum_{p=1}^{q-1}\frac{\delta_{\frac{p}{q}}}{q^2\ln^{1+\varepsilon}{q}},\end{align}
where $\delta_{\frac{p}{q}}$ is a Dirac measure supported at $\frac{p}{q}$. Then, $\text{supp}\mu\subset[0,1]$, and we have
$$|\mu|\le\sum_{q=10}^\infty\frac{1}{q\ln^{1+\varepsilon}{q}}<+\infty.$$

For each $\alpha\in K$, the potential of $\mu$ at $\alpha$ is
\begin{align*}U^\mu(\alpha)&=\sum_{q=10}^\infty\sum_{p=1}^{q-1}\frac{\ln^2{\left|\alpha-\frac{p}{q}\right|}(\ln{\ln{(e+\frac{1}{|\alpha-\frac{p}{q}|})}})^{2+4\varepsilon}}{q^2(\ln{q})^{1+\varepsilon}}\\ &\ge\sum_{n=1}^\infty\frac{\ln^2\ln Q_{n+1}\ (\ln\ln{\ln{Q_{n+1}}})^{2+4\varepsilon}}{Q^2_n(\ln Q_{n})^{1+\varepsilon}}=+\infty.  
\end{align*}

Thus, by property 2 of $C^1_\sigma$-capacity $C^1_{2+4\varepsilon}(K)=0$ for any $\varepsilon>0$ and hence $C^1_{\sigma}(K)=0$ for any  $\sigma>2$. This implies that $C^1_\sigma(E)=0$, for any $\sigma>2$. 
The last assertion follows from property 4 of $C_\sigma^1$-capacity.
\end{proof}
\begin{remark}
In \cite{KR2}, Rakhimov and the first author proved an analogue of the Sadullaev–Rakhimov theorem in higher dimensions. However, in higher dimensions, there is no number-theoretical approach to the Brjuno set; therefore, a priori, the idea we used here does not work in higher dimensions.
\end{remark}
For the proof of Theorem \ref{t:mainPM}, we need the following lemma, whose proof is apparently the same as that of Lemma \ref{claim:brjuno}.
\begin{lemma}\label{lemma:pm}Let $\alpha$ be an irrational number and $Q_n$ be as in \eqref{eq:Qn}. If 
  $$\sum_{n=1}^\infty\frac{\ln{\ln Q_{n+1}}}{Q_n}=+\infty$$
  then, for any $\varepsilon>0$, we have
       $$ \sum_{n=1}^\infty\frac{\ln^2\ln Q_{n+1}(\ln{\ln\ln{Q_{n+1}}})^{2+4\varepsilon}}{Q^2_n \ln^{1+\varepsilon}{Q_{n}}}=+\infty.$$ 
  \end{lemma}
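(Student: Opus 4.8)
The plan is to run the proof of Lemma \ref{claim:brjuno} essentially verbatim after the substitution $R_{n+1} := \ln Q_{n+1}$. Under this substitution the hypothesis reads $\sum_n \frac{\ln R_{n+1}}{Q_n} = +\infty$ and the conclusion reads $\sum_n \frac{\ln^2 R_{n+1}\,(\ln\ln R_{n+1})^{2+4\varepsilon}}{Q_n^2\,\ln^{1+\varepsilon} Q_n} = +\infty$, which is formally identical to the Brjuno statement with $Q_{n+1}$ replaced by $R_{n+1}$. The key observation making this work is that every place where the proof of Lemma \ref{claim:brjuno} exploits growth does so through $Q_n$ (via properties \ref{z.kasr1} and \ref{z.kasr2}), never through $Q_{n+1}$; so those estimates survive the substitution unchanged. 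Throughout, we silently discard the finitely many initial indices for which $\ln\ln Q_{n+1}$ or $\ln\ln\ln Q_{n+1}$ is not yet defined or positive; this is harmless since $Q_{n+1}\ge\frac12\left(\frac{\sqrt5+1}{2}\right)^n\to\infty$ and we are only concerned with divergence of a tail.

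Concretely, I would set $\beta=\frac{2+3\varepsilon}{2+4\varepsilon}<1$ and introduce
$$\mathcal{N}:=\left\{n\in\mathbb{N}:\ (\ln\ln\ln Q_{n+1})^{2+4\varepsilon}<(\ln Q_n)^{2+3\varepsilon}\right\},$$
then split the target series over $\mathcal{N}$ and its complement. For $n\in\mathcal{N}$ the definition gives $\ln\ln\ln Q_{n+1}<(\ln Q_n)^\beta$, hence $\ln\ln Q_{n+1}<e^{(\ln Q_n)^\beta}$; combining this with $(\ln Q_n)^\beta<\tfrac12\ln Q_n$ (valid for $n\ge n'_0$ by the exponential lower bound on $Q_n$) yields $\frac{\ln\ln Q_{n+1}}{Q_n}<\frac{1}{\sqrt{Q_n}}$, and $\sum_n \frac{1}{\sqrt{Q_n}}<+\infty$ again by property \ref{z.kasr1}. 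Therefore $\sum_{n\notin\mathcal{N}}\frac{\ln\ln Q_{n+1}}{Q_n}=+\infty$. Note that the $\beta$-power here strips exactly one logarithm off $\ln\ln Q_{n+1}$, just as in the Brjuno case it stripped one off $\ln Q_{n+1}$, so the quantity that gets controlled is precisely the numerator of the hypothesis series.

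Next I would apply the Cauchy–Bunyakovsky inequality exactly as before:
$$\sum_{n\notin\mathcal{N}}\frac{\ln\ln Q_{n+1}}{Q_n}\le\left(\sum_{n\notin\mathcal{N}}\frac{\ln^2\ln Q_{n+1}\,\ln^{1+2\varepsilon}Q_n}{Q_n^2}\right)^{\frac12}\left(\sum_{n\notin\mathcal{N}}\frac{1}{\ln^{1+2\varepsilon}Q_n}\right)^{\frac12},$$
using $\ln Q_n>Cn$ (property \ref{z.kasr1}) to get $\sum_n \frac{1}{\ln^{1+2\varepsilon}Q_n}<+\infty$, which forces $\sum_{n\notin\mathcal{N}}\frac{\ln^2\ln Q_{n+1}\,\ln^{1+2\varepsilon}Q_n}{Q_n^2}=+\infty$. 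Finally, writing $\ln^{1+2\varepsilon}Q_n=\frac{(\ln Q_n)^{2+3\varepsilon}}{\ln^{1+\varepsilon}Q_n}$ and using $(\ln Q_n)^{2+3\varepsilon}\le(\ln\ln\ln Q_{n+1})^{2+4\varepsilon}$ for $n\notin\mathcal{N}$, this divergent sum is bounded above by $\sum_{n\notin\mathcal{N}}\frac{\ln^2\ln Q_{n+1}\,(\ln\ln\ln Q_{n+1})^{2+4\varepsilon}}{Q_n^2\,\ln^{1+\varepsilon}Q_n}$, so that series diverges; since all terms of the full series are nonnegative, the full series over $n\ge1$ diverges as well.

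The main obstacle is nothing more than bookkeeping: one must check that the iterated logarithms appearing (up to $\ln\ln\ln Q_{n+1}$) are defined and positive for all large $n$ — immediate from the exponential growth of $Q_n$ — and that the exponents in the definition of $\mathcal{N}$ are arranged so that the $\beta$-power trick again produces a $\frac{1}{\sqrt{Q_n}}$-type summable bound. No genuinely new idea beyond the proof of Lemma \ref{claim:brjuno} is required; the entire argument is the Brjuno proof translated one logarithm outward.
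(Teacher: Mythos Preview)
Your proposal is correct and follows exactly the approach the paper indicates: it mirrors the proof of Lemma~\ref{claim:brjuno} with the set $\mathcal{N}'=\{n:(\ln\ln\ln Q_{n+1})^{2+4\varepsilon}<(\ln Q_n)^{2+3\varepsilon}\}$ and the inequality $\ln\ln Q_{n+1}<e^{(\ln Q_n)^\beta}$ for $n\in\mathcal{N}'$, then the same Cauchy--Bunyakovsky step. Your remark about discarding finitely many initial indices so that the iterated logarithms are defined and positive is a welcome clarification that the paper leaves implicit.
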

The proof mirrors Lemma \ref{claim:brjuno}, using the set
  $$\mathcal{N'}:=\left\{n\in\mathbb{N} :\ (\ln\ln\ln Q_{n+1})^{2+4\varepsilon}<(\ln Q_{n})^{2+3\varepsilon} \right\}$$   
and inequality
$$\ln \ln Q_{n+1}<e^{(\ln Q_{n})^\beta}$$ for any $n\in\mathcal{N'}$, $\beta=\frac{2+3\varepsilon}{2+4\varepsilon}<1$.

Let us now prove our second main result.
\begin{proof}[{Proof of Theorem \ref{t:mainPM}}]
The proof parallels to the proof of Theorem \ref{t:mainB}. 
For $\alpha\in\mathbb{R}\setminus\mathcal{PM}$ we have
$$\sum_{n=1}^\infty\frac{\ln\ln Q_{n+1}}{Q_n}=+\infty.$$
Again, it is enough to prove that $C_\sigma^2(E)=0$ for 
$$E:=\left\{ \  \alpha\in [0,1]: \  \sum_{n=1}^\infty\frac{\ln \ln Q_{n+1}}{Q_{n} }=+\infty  \right\} .$$ 
The Perez-Marco set $\mathcal{PM}\subset\mathbb{R}$ is also a Borel set. Therefore, by the property 1 of the $C_\sigma$-capacity, it is enough that $C^2_\sigma(K)$ for any compact $K\subset E$ and for any fixed $\sigma>2$. 

For any $\varepsilon>0$, we will show that 
\begin{equation}\label{eq:pminfty}
\sum_{q=10}^\infty\sum_{p=1}^{q-1}\frac{ \ln^2\ln(e+\frac{1}{ |\alpha-\frac{p}{q}|}) \ (\ln{\ln\ln{(e^3+\frac{1}{|\alpha-\frac{p}{q}|})}})^{2+4\varepsilon}}{q^2(\ln{q})^{1+\varepsilon}}=+\infty
\end{equation}
 for any $\alpha\in E$.
  
Fix $\alpha\in E$. Let $\frac{P_n}{Q_n}$ be as in \eqref{eq:Qn}. As before, we have the following inequality
\begin{align*}
\sum_{q=10}^\infty\sum_{p=1}^{q-1}\frac{ \ln^2\ln (e+\frac{1}{ |\alpha-\frac{p}{q}|}) \ (\ln{\ln\ln{( e^3+\frac{1}{|\alpha-\frac{p}{q}|})}})^{2+4\varepsilon}}{q^2({\ln{q}})^{1+\varepsilon}}\ge\sum_{n=2}^\infty\frac{\ln^2\ln Q_{n+1}\ (\ln\ln{\ln{Q_{n+1}}})^{2+4\varepsilon}}{Q^2_n(\ln Q_{n})^{1+\varepsilon}}.
\end{align*}
Consequently, thanks to Lemma \ref{lemma:pm} we have \eqref{eq:pminfty}. Define the following Borel measure
$$
\mu=\sum_{q=10}^\infty\sum_{p=1}^{q-1}\frac{\delta_{\frac{p}{q}}}{q^2\ln^{1+\varepsilon}{q}}.$$
Then by \eqref{eq:pminfty} the potential of $\mu$ at $\alpha$ is    
    $$U^2_\mu(\alpha)=\sum_{q=10}^\infty\sum_{p=1}^{q-1}\frac{ \ln^2\ln (e+\frac{1}{ |\alpha-\frac{p}{q}|}) \ (\ln{\ln\ln{( e^3+\frac{1}{|\alpha-\frac{p}{q}|})}})^{2+4\varepsilon}}{q^2({\ln{q}})^{1+\varepsilon}}=+\infty.$$
Thus, by property 2 of $C^2_\sigma$ capacity $C^2_{2+4\varepsilon}(K)=0$ for any $\varepsilon>0$ and hence $C^2_{\sigma}(K)=0$ for any  $\sigma>2$. This implies that $C^2_\sigma(E)=0$, for any $\sigma>2$. 
The last assertion follows from property 4 of $C_\sigma^2$-capacity.  
\end{proof}

\end{document}